\documentclass[12pt]{article}
\usepackage{graphicx}
\usepackage{amsmath,amsthm,amssymb,amsfonts,euscript,enumerate}
\usepackage{amsthm}
\usepackage{color,wrapfig}
\usepackage{pxfonts}
\usepackage{verbatim}
\usepackage{colortbl}
\newtheorem{theorem}{Theorem}[section]

\newtheorem{lemma}[theorem]{Lemma}
\newtheorem{conjecture}[theorem]{Conjecture}
\newtheorem{problem}[theorem]{Problem}

\newtheorem{defn}{Definition}[section]
\newtheorem{remark}[theorem]{Remark}

\setlength{\topmargin}{-0.1in}
\setlength{\textheight}{8.7in}
\setlength{\textwidth}{6.5in}
\setlength{\oddsidemargin}{0in}
\setlength{\evensidemargin}{0in}

\baselineskip=16pt

\begin{document}
\title{A recurring pattern in natural numbers of a certain property}
\author{Daniel Tsai\\
Graduate School of Mathematics, Nagoya University\\
Furocho, Chikusa-ku, Nagoya 464-8602}
\date{July 19, 2019}
\smallbreak \maketitle
\begin{abstract}
Natural numbers satisfying an unusual property are mentioned by the author in \cite{T}, in which their infinitude is also proved. In this paper, we start with an arbitrary natural number which is not a multiple of $10$ and non-palindromic, form numbers by concatenating its decimal digits, and investigate which of them have the unusual property. In particular, the pattern of which of them have the unusual property recurs.
\end{abstract}

\vskip 0.2truein

Key words: recurring pattern, natural numbers, property

AMS 2010 Mathematics Subject Classification: 11A25, 11A51

\vskip 0.2truein
\setcounter{equation}{0}
\setcounter{section}{0}

\section{Introduction}
\setcounter{equation}{0}
\setcounter{theorem}{0}

An unusual property which some natural numbers, e.g. 198, satisfy are defined by the author in \cite{T}. We see that
\begin{equation*}
\begin{aligned}
198=&2\cdot3^2\cdot11,\\
891=&3^4\cdot11,
\end{aligned}
\end{equation*}
and
\begin{equation*}
2+(3+2)+11=(3+4)+11.
\end{equation*}
That is, the sum of the numbers appearing in the prime factorizations of the two numbers are equal. Notice that the exponents $1$ "does not appear". In general, the definition is, that a natural number $n$ has this property if $10\nmid n$, $n$ is non-palindromic, and that the sum of the numbers appearing in the prime factorization of $n$ is equal to that of the number formed by reversing its decimal digits. In \cite{T}, the infinitude of such numbers is proved, in particular
\begin{equation}\label{18}
\begin{aligned}
18,1818,181818,\ldots&,\\
18,198,1998,19998,\ldots&
\end{aligned}
\end{equation}
all have this property. The first is the sequence of concatenations of $18$; the second is the sequence of numbers $19\ldots 98$, with any number of $9$'s in between. In this paper, we start with an arbitrary non-palindromic natural number $10\nmid n$, form, like in the first sequence in \eqref{18}, numbers by concatenating its decimal digits, and show that there is a recurring pattern in which of them have this property. More precisely, whether one of them have this property depends only on the number of times the digits of $n$ are concatenated to form it modulo some natural number.

  This unusual property, called $v$-palindromic in this paper, is defined using two concepts, namely 
\begin{itemize}
\item Reversing the decimal digits of a natural number $n$. In this paper we only allow $n$ to not be a multiple of $10$, and denote the resulting number by $r(n)$. The reason is that we do not want to have leading digits of $0$ after reversing.
\item In the prime factorization of a natural number
\begin{equation}\label{nintro}
n=p^{e_1}_1p^{e_2}_2\cdots p^{e_m}_m,
\end{equation}
summing all the numbers that appear, i.e. the prime factors and the exponents, but not including an exponent when it is $1$, because they are usually not written. In this paper we denote this sum by $v(n)$, i.e.
\begin{equation}\label{vn}
  v(n) = \sum^m_{i=1}(p_i+\iota(e_i)),
\end{equation}
where $\iota(e)=0$ if $e=1$ and $\iota(e)=e$ if $e\ge2$.

\end{itemize}

About reversing the decimal digits of a natural number, some investigations have been done by others. In \cite{K}, numbers $n$ such that $n$ divides $r(n)$, i.e. $n\mid r(n)$, are mentioned. In particular, all of the numbers in
\begin{equation}\label{2178}
  2178,21978,219978,2199978,\ldots,
\end{equation}
i.e. the sequence of numbers $219\ldots 978$, with any number of $9$'s in between, satisfy $4n=r(n)$. The resemblance of the second sequence in \eqref{18} with \eqref{2178} is a bit interesting. While the relation $n\mid r(n)$ is studied in \cite{K}, the relation $v(n)=v(r(n))$ is studied in this paper. In \cite{G}, non-palindromic prime numbers $p$ such that $r(p)$ is also prime are mentioned, they are called emirps.

About $v(n)$, similar arithmetic functions have been studied. In \cite{A}, assuming \eqref{nintro}, the arithmetic function
\begin{equation}\label{An}
  A(n) = \sum^m_{i=1}p_ie_i
\end{equation}
is studied. Also, the entries A008474 and A000026 of the OEIS \cite{S} are similar to $v(n)$. The entry A008474 is, assuming \eqref{nintro},
\begin{equation}\label{v'}
  v'(n) = \sum^m_{i=1}(p_i+e_i),
\end{equation}
which is almost the same as $v(n)$ except that it has $e_i$ instead of $\iota(e_i)$, i.e. when summing all the numbers that appear in \eqref{nintro}, also including an exponent when it is $1$.

Palindromes are numbers $n$ such that $n=r(n)$. These obviously satisfy $n\mid r(n)$ and also $v(n)=v(r(n))$. Therefore the problem studied in \cite{K}, as well as the content of this paper, are more about non-palindromes, rather than palindromes.

\section{Definition of the unusual property}
\setcounter{equation}{0}
\setcounter{theorem}{0} 

In this section we will recall the definition in \cite{T} of the unusual property. In the following, 
\begin{equation*}
\begin{aligned}
\mathbb{N}_{\ne10}=&\{n\in\mathbb{N}: 10\nmid n\},\\
\mathbb{Z}_{\ge0}=&\{z\in\mathbb{Z}: z\ge0\}.
\end{aligned}
\end{equation*}

\begin{defn}
For $n\in\mathbb{N}_{\ne10}$ with decimal representation $n=d_{k-1}\ldots d_1d_0$, we put
\begin{equation*}
r(n)=d_0d_1\ldots d_k.
\end{equation*}
That is, $r(n)$ is the number formed by writing the decimal digits of $n$ in reverse order. Hence we have $r:\mathbb{N}_{\ne10}\to\mathbb{N}_{\ne10}$. We define $n$ to be palindromic if $n=r(n)$.
\end{defn}

\begin{defn}
We put
\begin{itemize}
\item $v(p)=p$ for $p$ a prime,
\item $v(p^e)=p+e$ for $p$ a prime and $e\ge2$,
\end{itemize}
and insist that $v:\mathbb{N}\to \mathbb{Z}_{\ge0}$ be an additive arithmetic function.
If we put
\begin{eqnarray*}
\iota(e)=\left\{ \begin{array}{ll}
0 & (e=1) \\
e & (e\ge2), \\
\end{array} \right.
\end{eqnarray*}
then we may combine the above two points and just put
\begin{equation*}
v(p^e)=p+\iota(e).
\end{equation*}
\end{defn}

Let $n\ge2$ be a natural number with prime factorization
\begin{equation}\label{n}
n=p^{e_1}_1p^{e_2}_2\cdots p^{e_m}_m.
\end{equation}
Then
\begin{equation*}
v(n)=\sum^m_{i=1}v(p^{e_i}_i)=\sum^m_{i=1}(p_i+\iota(e_i)).
\end{equation*}
Hence $v(n)$ is the sum of the numbers appearing in the prime factorization of $n$, not counting exponents which are $1$.

We may now define the unusual property, which we call $v$-palindromic:

\begin{defn}\label{defnvpal}
A natural number $n$ is $v$-palindromic if $n\in\mathbb{N}_{\neq10}$, $n\neq r(n)$, and $v(n)=v(r(n))$.
\end{defn}

It is clear that if $n$ is $v$-palindromic then so is $r(n)$. As noted in the Introduction, $198$ and the numbers \eqref{18} are $v$-palindromic numbers. In the next section we shall state our main theorem.

\section{Statement of the main theorem}
\setcounter{equation}{0}
\setcounter{theorem}{0}

In this section we shall define some notations to state our main theorem.

\begin{defn}
For $c,k\ge1$, put
\begin{equation}
\rho_{c,k}=\overbrace{
1\underbrace{0\ldots0}_\text{$k-1$}1
\underbrace{0\ldots0}_\text{$k-1$}1
\ldots
1\underbrace{0\ldots0}_\text{$k-1$}1
}^\text{$c$},
\end{equation}
meaning that $1$ appears $c$ times and that between each consecutive pair of them $0$ appears $k-1$ times.
\end{defn}

It is clear that if $n$ is a $k$-digit number then the number formed by repeating its digits $c$ times is just $n\rho_{c,k}$. We may now state our main theorem:

\begin{theorem}\label{main}
Let $n$ be a natural number with $k$ digits and with $n\in\mathbb{N}_{\ne10}$ and $n\ne r(n)$. Then there exists a natural number $\omega>0$ such that for every $c\ge1$, $n\rho_{c,k}$ is $v$-palindromic if and only if $n\rho_{c+\omega,k}$ is. In other words, whether $n\rho_{c,k}$ is $v$-palindromic depends only on $c$ modulo $\omega$.
\end{theorem}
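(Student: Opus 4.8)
\medskip
\noindent\emph{Proof strategy.}
The plan is to reduce the statement to a question about the $p$-adic valuations of $\rho_{c,k}$ and then to control those valuations with the lifting-the-exponent lemma. First I would dispose of some harmless reductions. If $n=d_{k-1}\ldots d_1d_0$ in decimal, then $n\rho_{c,k}$ has decimal expansion equal to the block $d_{k-1}\ldots d_0$ repeated $c$ times; reversing this string gives the block $d_0d_1\ldots d_{k-1}$ repeated $c$ times, and since $10\nmid n$ we have $d_0\ne0$, so no leading zeros occur and $r(n\rho_{c,k})=r(n)\rho_{c,k}$. Also $n\rho_{c,k}$ ends in $d_0\ne0$, so $10\nmid n\rho_{c,k}$, while $n\rho_{c,k}=r(n)\rho_{c,k}$ would force $n=r(n)$, which is excluded. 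Hence $n\rho_{c,k}$ is $v$-palindromic exactly when $v(n\rho_{c,k})=v(r(n)\rho_{c,k})$, and it suffices to prove that the truth of this equation depends only on $c$ modulo some fixed $\omega>0$.

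Next I would cut the problem down to finitely many primes. Writing $v_p$ for the $p$-adic valuation, additivity of $v$ gives $v(N)=\sum_{p\mid N}\bigl(p+\iota(v_p(N))\bigr)$ for all $N\ge2$. Applying this to $N=n\rho_{c,k}$ and $N=r(n)\rho_{c,k}$ and using $v_p(n\rho_{c,k})=v_p(n)+v_p(\rho_{c,k})$, one checks that any prime $p\nmid n\,r(n)$ contributes the same amount — namely $p+\iota(v_p(\rho_{c,k}))$ if $p\mid\rho_{c,k}$ and $0$ otherwise — to both $v(n\rho_{c,k})$ and $v(r(n)\rho_{c,k})$, so these terms cancel. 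Thus $\Delta(c):=v(n\rho_{c,k})-v(r(n)\rho_{c,k})$ is a sum over the \emph{finite} set $Q$ of primes dividing $n\,r(n)$, the term for $p$ being a function of the single quantity $h_p(c):=v_p(\rho_{c,k})$ and of the fixed integers $f_p:=v_p(n)$, $g_p:=v_p(r(n))$.

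The heart of the argument is to understand $h_p(c)$ for $p\in Q$. Since $\rho_{c,k}$ ends in the digit $1$ it is coprime to $10$, so $h_p(c)=0$ for all $c$ if $p\in\{2,5\}$. For odd $p$ I would use $\rho_{c,k}=(10^{ck}-1)/(10^k-1)$ and the lifting-the-exponent lemma. If $p\mid10^k-1$, then lifting-the-exponent gives $v_p(10^{ck}-1)=v_p(10^k-1)+v_p(c)$, whence $h_p(c)=v_p(c)$. If $p\nmid10^k-1$, let $\ell_p\ge2$ be the multiplicative order of $10^k$ modulo $p$; then $h_p(c)=v_p(10^{ck}-1)$, which is $0$ unless $\ell_p\mid c$, in which case lifting-the-exponent gives $h_p(c)=a_p+v_p(c/\ell_p)$ with $a_p:=v_p(10^{\ell_p k}-1)\ge1$. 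In every case a short case check shows that whether $h_p(c)$ equals $0$, equals $1$, or is at least $2$ is determined by $c$ modulo a fixed integer $\mu_p$: one may take $\mu_p=1$ for $p\in\{2,5\}$, $\mu_p=p^2$ when $p\mid10^k-1$, and $\mu_p=p\,\ell_p$ when $p\nmid10^k-1$ (using $\gcd(p,\ell_p)=1$, which holds since $\ell_p$ divides $p-1$).

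Finally I would assemble the pieces. For $p\in Q$ the contribution of $p$ to $\Delta(c)$ is
\begin{equation*}
\bigl(p+\iota(f_p+h_p(c))\bigr)[\,f_p+h_p(c)\ge1\,]-\bigl(p+\iota(g_p+h_p(c))\bigr)[\,g_p+h_p(c)\ge1\,],
\end{equation*}
where $[\,\cdot\,]$ is the indicator of the bracketed condition. As $c$ varies this takes at most three values: when $h_p(c)\ge2$ both $f_p+h_p(c)$ and $g_p+h_p(c)$ exceed $1$, so $\iota$ acts as the identity and the expression telescopes to the constant $f_p-g_p$; for $h_p(c)=1$ and for $h_p(c)=0$ it is likewise a fixed number depending only on $p,f_p,g_p$. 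Hence the contribution of $p$ to $\Delta(c)$ depends only on which of $\{0\},\{1\},\{{\ge}2\}$ contains $h_p(c)$, hence only on $c\bmod\mu_p$; therefore $\Delta(c)$ depends only on $c$ modulo $\omega:=\operatorname{lcm}_{p\in Q}\mu_p$, a positive integer since $n\ne r(n)$ forces $n\ge10$ and so $Q\ne\emptyset$. As $\Delta(c)=0$ is equivalent to $n\rho_{c,k}$ being $v$-palindromic, the theorem follows (and this argument in fact yields an explicit $\omega$ from the primes dividing $n\,r(n)$). The step I expect to be the main obstacle is the third one: carrying out the lifting-the-exponent computations correctly for all odd $p$ — in particular $p=3$, where $10$ has order $1$ — and arranging the case distinctions so that the coarse datum ``$h_p(c)\in\{0\}$, $\{1\}$, or $\{{\ge}2\}$'' is genuinely periodic in $c$; the rest is routine bookkeeping with the additive function $v$.
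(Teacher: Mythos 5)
Your proof is correct, and its overall skeleton is the one the paper uses: pass to $r(n\rho_{c,k})=r(n)\rho_{c,k}$, exploit additivity of $v$ so that only the finitely many primes dividing $n\,r(n)$ matter, note that each such prime's contribution to $v(n\rho_{c,k})-v(r(n)\rho_{c,k})$ depends only on whether the exponent of $p$ in $\rho_{c,k}$ is $0$, $1$, or at least $2$ (this is exactly what the paper encodes via the eventual constancy of its functions $\varphi_{p,\delta}$), and then show that this coarse datum is periodic in $c$. Where you genuinely diverge is in the two technical ingredients. For the periodicity of divisibility, the paper's Lemma \ref{divisiblelemma} uses the multiplicative order of $10^k$ in $(\mathbb{Z}/p^{\alpha+\beta}\mathbb{Z})^{\times}$ to get $p^{\alpha}\mid\rho_{c,k}\iff h_{p^{\alpha},k}\mid c$, and takes $\omega=\operatorname{lcm}\{h_{p_j,k},h_{p_j^2,k}\}$; you instead compute $v_p(\rho_{c,k})$ exactly by lifting the exponent, obtaining $v_p(c)$ when $p\mid 10^k-1$ and $a_p+v_p(c/\ell_p)$ otherwise, and read off explicit moduli $p^2$ resp.\ $p\ell_p$ (your verification that $\gcd(p,\ell_p)=1$ and the case split on $a_p=1$ versus $a_p\ge2$ are the points that needed care, and they check out). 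For the bookkeeping, the paper enumerates the solution set $U$ of $\sum_j\operatorname{sgn}(\delta_j)u_j=0$ over the ranges $R_{p,\delta}$ and translates each atomic equation through Lemmas \ref{firsttrans} and \ref{secondtrans}, whereas you argue directly that the per-prime contribution is a well-defined function of the class of $v_p(\rho_{c,k})$ in $\{0\},\{1\},\{\ge2\}$, which shortens the case analysis. What each buys: LTE yields a sharper, fully explicit description of $v_p(\rho_{c,k})$, at the cost of producing a period that is in general a multiple of the paper's $\omega_f(n)$ (e.g.\ $p\ell_p$ versus the order of $10^k$ modulo $p^2$, which can be just $\ell_p$); the paper's order-based lemma is more elementary and gives the smaller explicit period that its later table of $\omega_f(n)$ relies on. Your explicit verification of the reductions $r(n\rho_{c,k})=r(n)\rho_{c,k}$, $10\nmid n\rho_{c,k}$, and $n\rho_{c,k}\ne r(n\rho_{c,k})$ is a small point the paper leaves implicit, and including all primes of $n\,r(n)$ rather than only those with $e_i\ne f_i$ is harmless since those terms vanish identically.
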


\begin{remark}
In fact the main theorem also holds if in defining $v$-palindromic numbers we used the $v'$ in \eqref{v'} instead of $v$. Moreover the proof will be slightly shorter because one does not have to deal with the subtlety caused by not summing an exponent when it is $1$.
\end{remark}

We make the following definition based on the truth of the above theorem:

\begin{defn}\label{defnofwc}
A natural number $\omega>0$ satisfying the condition of the above theorem is called a period of $n$ and the smallest one is denoted $\omega(n)$. If there exists a $c\ge1$ such that $n\rho_{c,k}$ is $v$-palindromic, the smallest one is called the order of $n$ and denoted $c(n)$. If such a $c$ does not exist then we write $c(n)=\infty$.
\end{defn}

We have the following:

\begin{theorem}\label{period}
The set of all periods of $n$ is $\{q\omega(n):q\in\mathbb{N}\}$.
\end{theorem}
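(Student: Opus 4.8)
The plan is to prove the two inclusions $\{q\omega(n):q\in\mathbb{N}\}\subseteq P$ and $P\subseteq\{q\omega(n):q\in\mathbb{N}\}$, where $P$ denotes the set of all periods of $n$ (so $P$ consists of positive integers, and $\{q\omega(n):q\in\mathbb{N}\}=\{\omega(n),2\omega(n),\dots\}$). By Theorem \ref{main} we have $P\ne\emptyset$, so $\omega(n)=\min P$ is well defined. For brevity I will call an integer $c\ge1$ \emph{good} if $n\rho_{c,k}$ is $v$-palindromic; by Definition \ref{defnofwc}, a positive integer $\omega$ lies in $P$ exactly when, for every $c\ge1$, $c$ is good if and only if $c+\omega$ is good.

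For the inclusion $\{q\omega(n):q\in\mathbb{N}\}\subseteq P$ I would argue that if $\omega\in P$ and $q\ge1$, then for every $c\ge1$ iterating the defining equivalence $q$ times shows $c$ is good $\iff c+\omega$ is good $\iff\cdots\iff c+q\omega$ is good, so $q\omega\in P$; applying this with $\omega=\omega(n)$ gives the desired inclusion.

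For the reverse inclusion I would reduce everything to the statement that $P$ is closed under greatest common divisors, i.e. $\omega_1,\omega_2\in P\implies\gcd(\omega_1,\omega_2)\in P$. Granting this, for an arbitrary $\omega\in P$ we get $d:=\gcd(\omega,\omega(n))\in P$ with $d\le\omega(n)$, so minimality of $\omega(n)$ forces $d=\omega(n)$; hence $\omega(n)\mid\omega$ and $\omega\in\{q\omega(n):q\in\mathbb{N}\}$. To prove closure under $\gcd$, I would put $d=\gcd(\omega_1,\omega_2)$ and, via B\'ezout, choose \emph{positive} integers $a,b$ with $a\omega_1-b\omega_2=d$; any integer solution of this equation can be translated by $(t\omega_2,t\omega_1)$, $t\in\mathbb{Z}$, until both coordinates are positive. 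Then for a fixed $c\ge1$: since $\omega_2\in P$ and every index occurring is $\ge1$, applying the equivalence $b$ times shows $c+d$ is good $\iff c+d+b\omega_2$ is good; but $c+d+b\omega_2=c+a\omega_1$, and since $\omega_1\in P$, applying the equivalence $a$ times shows $c+a\omega_1$ is good $\iff c$ is good. Thus $c+d$ is good $\iff c$ is good for every $c\ge1$, i.e. $d\in P$.

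The only genuinely delicate point — and the reason one cannot simply cite the familiar fact that the periods of a bi-infinite sequence form a subgroup of $\mathbb{Z}$ — is that goodness is defined for $c\ge1$ only, so periods cannot be subtracted at will; choosing the B\'ezout coefficients $a,b$ of the same sign is exactly what keeps every index appearing in the chain $c+d\mapsto c+d+b\omega_2=c+a\omega_1\mapsto c$ at least $1$, making each step a legitimate instance of the period equivalence. Everything else is bookkeeping.
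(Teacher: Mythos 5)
Your proof is correct, but there is nothing in the paper to compare it against: the paper states Theorem \ref{period} without proof (it is only invoked later, in the Further Problems section, to conclude $\omega(n)\mid\omega_f(n)$). On its own merits your argument works. The forward inclusion by iterating the defining equivalence is fine, and your reduction of the reverse inclusion to closure of the set of periods under gcd, together with the choice of \emph{positive} B\'ezout coefficients $a,b$ with $a\omega_1-b\omega_2=d$ so that every index in the chain $c+d,\dots,c+d+b\omega_2=c+a\omega_1,\dots,c$ stays $\ge1$, correctly handles the one genuine subtlety, namely that goodness is defined only for $c\ge1$ and periods cannot simply be subtracted. A slightly lighter route to the same end is the division algorithm: given a period $\omega$, write $\omega=q\omega(n)+r$ with $0\le r<\omega(n)$; for every $c\ge1$ one has $c$ good $\iff c+\omega$ good (since $\omega$ is a period) and $c+r$ good $\iff c+r+q\omega(n)=c+\omega$ good (since $\omega(n)$ is a period, applied $q$ times, all indices $\ge1$), so $c$ good $\iff c+r$ good; if $r>0$ this makes $r$ a period smaller than $\omega(n)$, a contradiction, hence $\omega(n)\mid\omega$. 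This avoids B\'ezout and the gcd lemma entirely, but your version is equally valid.
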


We prove our main theorem in Section \ref{proof}. Before that, we need some preparation. In Section \ref{divisible} we investigate some divisibility properties of the numbers $\rho_{c,k}$. In Section \ref{819} we first consider the case $n=819$ of the main theorem; the proof of the main theorem is essentially a generalization of this.

\section{Divisibility properties of $\rho_{c,k}$}\label{divisible}
\setcounter{equation}{0}
\setcounter{theorem}{0}

We consider the divisibility of the numbers $\rho_{c,k}$ by prime powers $p^\alpha$. Recall that $\operatorname{ord}_p(n)$ is the largest integer $\beta$ with $p^\beta\mid n$. We have:

\begin{lemma}\label{divisiblelemma}
Let $p^\alpha$ be a prime power, with $p\ne2,5$. Let $k\ge1$, let $\beta=\operatorname{ord}_p(10^k-1)$, and let $h$ be the order of $10^k$ regarded as an element of $(\mathbb{Z}/p^{\alpha+\beta}\mathbb{Z})^\times$. Then $h>1$ and for $c\ge1$, $p^\alpha\mid\rho_{c,k}$ if and only if $h\mid c$.
\end{lemma}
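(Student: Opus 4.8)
The plan is to turn the combinatorial description of $\rho_{c,k}$ into a closed form and then read off the divisibility from the multiplicative order of $10^k$. First I would note that a string of $c$ blocks of length $k$ is a geometric progression with ratio $10^k$, so
\[
\rho_{c,k}=\sum_{j=0}^{c-1}10^{jk}=\frac{10^{ck}-1}{10^k-1},
\]
and in particular $10^k-1\mid 10^{ck}-1$. Since $p\ne 2,5$, the integer $10^k$ is coprime to $p$ and hence a unit modulo $p^{\alpha+\beta}$, so the order $h$ is well defined. To see that $h>1$, observe that $\alpha\ge 1$ because $p^\alpha$ is a prime power, so $\operatorname{ord}_p(10^k-1)=\beta<\alpha+\beta$; therefore $p^{\alpha+\beta}\nmid 10^k-1$, i.e. $10^k\not\equiv 1\pmod{p^{\alpha+\beta}}$, which forces $h\ne 1$.

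For the equivalence I would use that $\operatorname{ord}_p$ is additive on products. Writing $10^{ck}-1=(10^k-1)\,\rho_{c,k}$ gives
\[
\operatorname{ord}_p(\rho_{c,k})=\operatorname{ord}_p\!\left(10^{ck}-1\right)-\operatorname{ord}_p\!\left(10^k-1\right)=\operatorname{ord}_p\!\left(10^{ck}-1\right)-\beta .
\]
Hence $p^\alpha\mid\rho_{c,k}$ if and only if $\operatorname{ord}_p(10^{ck}-1)\ge \alpha+\beta$, i.e. $10^{ck}\equiv 1\pmod{p^{\alpha+\beta}}$, i.e. $(10^k)^c\equiv 1\pmod{p^{\alpha+\beta}}$; and by the definition of the multiplicative order $h$ of $10^k$ modulo $p^{\alpha+\beta}$, this last congruence holds exactly when $h\mid c$. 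This proves both assertions at once and, I would remark, works uniformly whether or not $p\mid 10^k-1$ (so no appeal to the lifting-the-exponent lemma is needed).

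I do not anticipate a genuine obstacle here; the one point that needs care is the exact size of the modulus. One must take $h$ modulo $p^{\alpha+\beta}$ rather than modulo $p^{\alpha}$, the extra factor $p^{\beta}$ being precisely what is absorbed by the denominator $10^k-1$ when passing from $10^{ck}-1$ to $\rho_{c,k}$. I would make this bookkeeping explicit, and I would also note the degenerate case $c=1$: there $\rho_{1,k}=1$ is divisible by no $p^\alpha$, consistent with $h>1$ and hence $h\nmid 1$, which is already covered by the general computation above.
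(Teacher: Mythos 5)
Your proof is correct and follows essentially the same route as the paper: both rest on the identity $(10^k-1)\rho_{c,k}=10^{ck}-1$, deduce that $p^\alpha\mid\rho_{c,k}$ is equivalent to $10^{ck}\equiv1\pmod{p^{\alpha+\beta}}$ via the valuation $\beta=\operatorname{ord}_p(10^k-1)$, and conclude with the standard order argument, with $h>1$ obtained exactly as in the paper from $p^{\alpha+\beta}\nmid 10^k-1$. Your version merely makes the $\operatorname{ord}_p$ bookkeeping explicit where the paper leaves it implicit.
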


\begin{proof}
We first show that $h>1$. That $h=1$ means that
\begin{equation*}
10^k\equiv1\pmod{p^{\alpha+\beta}}
\iff p^{\alpha+\beta}\mid 10^k-1\iff p^{\alpha+\operatorname{ord}_p(10^k-1)}\mid 10^k-1,
\end{equation*}
which cannot be. Whence $h>1$. We have
\begin{equation*}
(10^k-1)\rho_{c,k}=(10^k-1)\sum^{c-1}_{i=0}10^{ki}=10^{kc}-1.
\end{equation*}
As $\beta=\operatorname{ord}_p(10^k-1)$,
\begin{equation*}
p^\alpha\mid\rho_{c,k}\iff 10^{kc}-1\equiv0\pmod{p^{\alpha+\beta}}\iff10^{kc}\equiv1\pmod{p^{\alpha+\beta}}\iff h\mid c,
\end{equation*}
where the last $\iff$ is due to the structure of cyclic subgroups.
\end{proof}

\begin{remark}\label{however}
In Lemma \ref{divisiblelemma}, if $p=2,5$, then $10^k$ cannot be regarded as an element of $(\mathbb{Z}/p^{\alpha+\beta}\mathbb{Z})^\times$. But obviously for every $c\ge1$, $p^\alpha\nmid\rho_{c,k}$. Also, let us denote the $h$ in the lemma by $h_{p^\alpha,k}$.
\end{remark}

Using Mathematica \cite{Math} we have:

  \bigskip
  \begin{center}
    \def\arraystretch{2.2}
  \begin{tabular}{c|c|c|c|c|c|c}
  $p^\alpha$ & $7$ & $7^2$ & $13$ & $13^2$ & $17$ & $17^2$ \\
  \hline \hline
  $h_{p^\alpha,3}$ & $2$ & $14$ & $2$ & $26$ & $16$ & $272$ \\
\end{tabular}
\end{center}
\bigskip
Regarding divisibility in general, not just for $\rho_{c,k}$, we recall that:

\begin{lemma}\label{secondtrans}
Let $n$ be a natural number, let $p$ be a prime, and let $g=\operatorname{ord}_p(n)$.
\begin{enumerate}
\item $g=0 \iff p\nmid n$,
\item $g=1 \iff p\mid n\text{ and }p^2\nmid n$,
\item $g\le1\iff p^2\nmid n$,
\item $g\ge1\iff p\mid n$,
\item $g\ge2\iff p^2\mid n$.
\end{enumerate}
\end{lemma}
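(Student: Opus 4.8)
The plan is to reduce all five equivalences to a single monotonicity fact about the order function, namely that for any integer $\beta\ge0$ one has
\begin{equation*}
p^\beta\mid n\iff\beta\le g.
\end{equation*}
This is immediate from the definition of $g=\operatorname{ord}_p(n)$ as the \emph{largest} exponent $\beta$ with $p^\beta\mid n$: if $\beta\le g$ then $p^\beta\mid p^g\mid n$, whereas if $\beta>g$ then $p^\beta\nmid n$ by the maximality of $g$. Once this equivalence is established, each of the five statements follows by specializing $\beta$ and negating, using only the standing fact that $g\ge0$.

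First I would record items (4) and (5) directly: taking $\beta=1$ gives $p\mid n\iff g\ge1$, and taking $\beta=2$ gives $p^2\mid n\iff g\ge2$. Item (3) is then the negation of (5): negating both sides of $g\ge2\iff p^2\mid n$ yields $g\le1\iff p^2\nmid n$, where I use that for an integer the negation of $g\ge2$ is exactly $g\le1$. Likewise, item (1) follows by negating (4): since $g\ge0$, the negation of $g\ge1$ is $g=0$, whence $g=0\iff p\nmid n$. Finally, item (2) is obtained by conjoining (4) and (3), since $g=1$ is equivalent to ($g\ge1$ and $g\le1$), hence to ($p\mid n$ and $p^2\nmid n$).

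There is essentially no obstacle here; the proof is a routine unfolding of the definition of $\operatorname{ord}_p$. The only point deserving the slightest care is the repeated appeal to the fact that $g$ is a nonnegative integer, which is precisely what makes the negations clean, turning $\neg(g\ge1)$ into $g=0$ and $\neg(g\ge2)$ into $g\le1$ rather than into weaker bounds requiring extra bookkeeping.
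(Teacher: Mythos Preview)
Your proof is correct and entirely routine. The paper itself does not prove this lemma at all; it simply introduces it with the phrase ``we recall that'' and moves on, treating the five equivalences as immediate consequences of the definition of $\operatorname{ord}_p$. Your reduction to the single monotonicity fact $p^\beta\mid n\iff\beta\le g$ is the standard way to unpack that definition, so your write-up is fully in line with (and more detailed than) the paper's treatment.
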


We will need this lemma later.

\section{The functions $\varphi_{p,\delta}$}\label{functions}
\setcounter{equation}{0}
\setcounter{theorem}{0}

Fix a prime $p$, the sequence of powers of $p$ is
\begin{equation*}
1,p,p^2,\ldots,p^\alpha,\ldots.
\end{equation*}
Applying $v$ to them yields
\begin{equation*}
0,p,p+2,\ldots,p+\alpha,\ldots.
\end{equation*}
Now we take differences of consecutive terms to get
\begin{equation}\label{difference}
p,2,1,\ldots,1,\ldots,
\end{equation}
with all $1$'s from the third term onwards. We give notation for terms of this sequence

\begin{defn}
For a prime $p$ and integer $\alpha\ge0$, put
\begin{equation*}
\varphi_{p,1}(\alpha)=v(p^{\alpha+1})-v(p^\alpha).
\end{equation*}
\end{defn}

In this notation then, the sequence \eqref{difference} is $(\varphi_{p,1}(\alpha))^\infty_{\alpha=0}$. More generally we define:

\begin{defn}
For a prime $p$, an integer $\alpha\ge0$, and a $\delta\ge1$, put
\begin{equation*}
\varphi_{p,\delta}(\alpha)=v(p^{\alpha+\delta})-v(p^\alpha).
\end{equation*}
\end{defn}

In this notation, for instance, the sequence $(\varphi_{p,3}(\alpha))^\infty_{\alpha=0}$ is
\begin{equation*}
p+3,4,3,\ldots,3,\ldots,
\end{equation*}
with all $3$'s from the third term onwards. More generally, for $\delta\ge2$, the sequence $(\varphi_{p,\delta}(\alpha))^\infty_{\alpha=0}$ is just
\begin{equation}\label{differencedelta}
p+\delta,\delta+1,\delta,\ldots,\delta,\ldots,
\end{equation}

We may view, for a prime $p$ and $\delta\ge1$, $\varphi_{p,\delta}:\mathbb{Z}_{\ge0}\to\mathbb{N}$ as a function of $\alpha\in\mathbb{Z}_{\ge0}$.

Rephrasing \eqref{difference} and \eqref{differencedelta}, the values of $\varphi_{p,\delta}$ may be summarized as follows.
\begin{eqnarray}\label{varphi21}
\varphi_{2,1}(\alpha)=\left\{ \begin{array}{ll}
2 & (\alpha=0,1) \\
1 & (\alpha\ge2), \\
\end{array} \right.
\end{eqnarray}
\begin{eqnarray}\label{varphip1}
\varphi_{p,1}(\alpha)=\left\{ \begin{array}{ll}
p & (\alpha=0) \\
2 & (\alpha=1) \\
1 & (\alpha\ge2) \\
\end{array} \right.\quad\text{if } p\ne2,
\end{eqnarray}
and
\begin{eqnarray}\label{varphipdelta}
\varphi_{p,\delta}(\alpha)=\left\{ \begin{array}{ll}
p+\delta & (\alpha=0) \\
\delta+1 & (\alpha=1) \\
\delta & (\alpha\ge2) \\
\end{array} \right.\quad\text{if } \delta\ge2.
\end{eqnarray}
Where we have deliberately distinguished between the cases where the values are distinct. We give a notation for the ranges of $\varphi_{p,\delta}$:

\begin{defn}
For a prime $p$ and $\delta\ge1$ put $R_{p,\delta}=\varphi_{p,\delta}(\mathbb{Z}_{\ge0})$.
\end{defn}

\begin{remark}
In view of \eqref{varphi21}, \eqref{varphip1}, and \eqref{varphipdelta}, it is clear that $|R_{2,1}|=2$ and $|R_{p,\delta}|=3$ otherwise. Also, any nonempty fiber of $\varphi_{p,\delta}$ is one of
\begin{equation*}
\{0\},\{1\},\{0,1\},\mathbb{Z}_{\ge2}=\{z\in\mathbb{Z}:z\ge2\}.
\end{equation*}
\end{remark}

Following directly from \eqref{varphi21}, \eqref{varphip1}, and \eqref{varphipdelta}, we have the following:

\begin{lemma}\label{firsttrans}
Let $p$ be a prime, $\delta\ge1$, $u\in R_{p,\delta}$, and $\mu\ge0$. Then we have:
\begin{enumerate}
\item In case $\varphi^{-1}_{p,\delta}(u)=\{0\}$, for $g\ge0$,
\begin{eqnarray}\label{firsttrans1}
\varphi_{p,\delta}(\mu+g)=u\iff \mu+g=0\iff \left\{ \begin{array}{ll}
g=0 & (\mu=0) \\
\text{impossible} & (\mu\ge1), \\
\end{array} \right.
\end{eqnarray}
\item In case $\varphi^{-1}_{p,\delta}(u)=\{1\}$, for $g\ge0$,
\begin{eqnarray}\label{firsttrans2}
\varphi_{p,\delta}(\mu+g)=u\iff \mu+g=1\iff \left\{ \begin{array}{ll}
g=1-\mu & (\mu=0,1) \\
\text{impossible} & (\mu\ge1), \\
\end{array} \right.
\end{eqnarray}
\item In case $\varphi^{-1}_{p,\delta}(u)=\{0,1\}$, for $g\ge0$,
\begin{eqnarray}\label{firsttrans3}
\varphi_{p,\delta}(\mu+g)=u\iff \mu+g\in\{0,1\}\iff \left\{ \begin{array}{ll}
g\le1 & (\mu=0) \\
g=0 & (\mu=1), \\
\text{impossible} & (\mu\ge2), \\
\end{array} \right.
\end{eqnarray}
\item In case $\varphi^{-1}_{p,\delta}(u)=\mathbb{Z}_{\ge2}$, for $g\ge0$,
\begin{eqnarray}\label{firsttrans4}
\varphi_{p,\delta}(\mu+g)=u\iff \mu+g\ge2\iff \left\{ \begin{array}{ll}
g\ge2-\mu & (\mu=0,1) \\
\text{always true} & (\mu\ge2). \\
\end{array} \right.
\end{eqnarray}
\end{enumerate}
Here impossible means that no $g\ge0$ can be found to fulfill $\varphi_{p,\delta}(\mu+g)=u$, and that always true means that all $g\ge0$ fulfills $\varphi_{p,\delta}(\mu+g)=u$.
\end{lemma}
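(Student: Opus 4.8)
The whole statement is a book-keeping consequence of the explicit formulas \eqref{varphi21}, \eqref{varphip1}, and \eqref{varphipdelta} for $\varphi_{p,\delta}$, so the plan is simply to organise the resulting case analysis. First, by those formulas every nonempty fiber $\varphi^{-1}_{p,\delta}(u)$ with $u\in R_{p,\delta}$ is exactly one of $\{0\}$, $\{1\}$, $\{0,1\}$, $\mathbb{Z}_{\ge2}$, as recorded in the remark immediately preceding the lemma, and each of these four shapes really does occur for a suitable $(p,\delta)$; hence the four enumerated cases are genuinely exhaustive. In each case the first biconditional is nothing but the defining equivalence $x\in\varphi^{-1}_{p,\delta}(u)\iff\varphi_{p,\delta}(x)=u$ specialised to $x=\mu+g$, which is legitimate since $\mu,g\ge0$ makes $\mu+g$ a nonnegative integer.

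The second biconditional in each case is then elementary arithmetic of nonnegative integers, reducing a membership or inequality statement about $\mu+g$ to a statement about $g$ for each fixed value or range of $\mu$. In case~(1), $\mu+g=0$ with $\mu,g\ge0$ forces $\mu=g=0$, so the condition becomes $g=0$ when $\mu=0$ and is unsatisfiable when $\mu\ge1$. In case~(2), $\mu+g=1$ forces $(\mu,g)\in\{(0,1),(1,0)\}$, so the condition becomes $g=1-\mu$ when $\mu\in\{0,1\}$ and is unsatisfiable when $\mu\ge2$. In case~(3), $\mu+g\in\{0,1\}$ unwinds to $g\le1$ when $\mu=0$, to $g=0$ when $\mu=1$, and to nothing when $\mu\ge2$. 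In case~(4), $\mu+g\ge2$ unwinds to $g\ge2-\mu$ when $\mu\in\{0,1\}$ and holds for every $g\ge0$ when $\mu\ge2$.

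There is no genuine obstacle here; the proof is a finite verification. The only points that demand a little care are, first, confirming from \eqref{varphi21}--\eqref{varphipdelta} that the list of four fiber-shapes is both complete and free of spurious entries, and second, getting the breakpoints in $\mu$ exactly right in the displayed tables---in particular that in case~(2) the ``impossible'' branch is valid precisely for $\mu\ge2$ (matching the $\mu\ge2$ branch already appearing in case~(3)), so that the case splits on $\mu$ partition $\mathbb{Z}_{\ge0}$ without overlap or gap.
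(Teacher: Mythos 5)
Your proposal is correct and matches the paper's (implicit) argument exactly: the paper offers no separate proof, stating that the lemma follows directly from the explicit formulas \eqref{varphi21}, \eqref{varphip1}, \eqref{varphipdelta}, and your write-up is precisely that finite case-by-case verification, with the first biconditional being fiber membership and the second being elementary arithmetic on $\mu+g$. You also rightly observe that in case~(2) the ``impossible'' branch should read $\mu\ge2$ rather than the $\mu\ge1$ printed in the statement, which is a typo in the paper rather than a defect in your argument.
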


\section{The case of $n=819$}\label{819}
\setcounter{equation}{0}
\setcounter{theorem}{0}

We consider the case $n=819$ of the main theorem Theorem \ref{main}. We have the prime factorizations
\begin{equation*}
\begin{aligned}
819=&3^2\cdot 7\cdot 13,\\
918=&2\cdot 3^3\cdot 17.
\end{aligned}
\end{equation*}
Let the prime factorization of $\rho_{c,3}$ be
\begin{equation*}
\rho_{c,3}=3^{g_1}\cdot7^{g_2}\cdot13^{g_3}\cdot17^{g_4}\cdot b,
\end{equation*}
where $(b,3\cdot7\cdot13\cdot17)=1$. The numbers $g_1,g_2,g_3,g_4,b$ obviously depend on $c$, but we have suppressed the notation for simplicity. Now
\begin{equation*}
\begin{aligned}
819\rho_{c,3}=&3^{2+g_1}\cdot 7^{1+g_2}\cdot 13^{1+g_3}\cdot17^{g_4}\cdot b,\\
r(819\rho_{c,3})=918\rho_{c,3}=&2\cdot3^{3+g_1}\cdot 7^{g_2}\cdot 13^{g_3}\cdot17^{1+g_4}\cdot b.
\end{aligned}
\end{equation*}
Applying the additive function $v$ to these equations
\begin{equation*}
\begin{aligned}
v(819\rho_{c,3})=&v(3^{2+g_1})+v(7^{1+g_2})+v(13^{1+g_3})+v(17^{g_4})+v(b),\\
v(r(819\rho_{c,3}))=v(918\rho_{c,3})=&v(2)+v(3^{3+g_1})+v(7^{g_2})+v(13^{g_3})+v(17^{1+g_4})+v(b).
\end{aligned}
\end{equation*}
Hence $819\rho_{c,3}$ is a $v$-palindromic number if and only if the above two quantities are equal, that is, after rearranging
\begin{equation*}
(v(7^{1+g_2})-v(7^{g_2}))+(v(13^{1+g_3})-v(13^{g_3}))=2+(v(3^{3+g_1})-v(3^{2+g_1}))+(v(17^{1+g_4})-v(17^{g_4})).
\end{equation*}
In terms of the functions $\varphi_{p,\delta}$ of Section \ref{functions}, this becomes
\begin{equation}\label{relationb}
\varphi_{7,1}(g_2)+\varphi_{13,1}(g_3)=2+\varphi_{3,1}(2+g_1)+\varphi_{17,1}(g_4).
\end{equation}
Since $2+g_1\ge2$, by \eqref{varphip1}, $\varphi_{3,1}(2+g_1)=1$, therefore \eqref{relationb} becomes
\begin{equation}\label{relation}
\varphi_{7,1}(g_2)+\varphi_{13,1}(g_3)=3+\varphi_{17,1}(g_4).
\end{equation}
Now consider the equation
\begin{equation}\label{relationu819}
u_2+u_3=3+u_4.
\end{equation}
We want to solve it for $u_2\in R_{7,1}$, $u_3\in R_{13,1}$, and $u_4\in R_{17,1}$. In view of \eqref{varphip1},
\begin{equation*}
R_{7,1}=\{7,2,1\},R_{13,1}=\{13,2,1\},R_{17,1}=\{17,2,1\}.
\end{equation*}
By trying all possibilities we see that the only solutions are $(u_2,u_3,u_4)=(7,13,17),(2,2,1)$. Whence \eqref{relation} is satisfied if and only if
\begin{equation*}
\begin{aligned}
(\varphi_{7,1}(g_2),\varphi_{13,1}(g_3),\varphi_{17,1}(g_4))=&(7,13,17)\text{ or}\\
(\varphi_{7,1}(g_2),\varphi_{13,1}(g_3),\varphi_{17,1}(g_4))=&(2,2,1).
\end{aligned}
\end{equation*}
We first consider when $(\varphi_{7,1}(g_2),\varphi_{13,1}(g_3),\varphi_{17,1}(g_4))=(7,13,17)$. By Lemmas \ref{firsttrans} (or more easily just by looking at \eqref{varphip1}), \ref{secondtrans}, \ref{divisiblelemma}, and Table,
\begin{equation}\label{condition1819}
\begin{aligned}
\varphi_{7,1}(g_2)=7\iff& g_2=0\iff 7\nmid\rho_{c,3}\iff h_{7,3}\nmid c\iff 2\nmid c\\
\varphi_{13,1}(g_3)=13\iff& g_3=0\iff 13\nmid\rho_{c,3}\iff h_{13,3}\nmid c\iff 2\nmid c\\
\varphi_{17,1}(g_4)=17\iff& g_4=0\iff 17\nmid\rho_{c,3}\iff h_{17,3}\nmid c\iff 16\nmid c.
\end{aligned}
\end{equation}
Hence $(\varphi_{7,1}(g_2),\varphi_{13,1}(g_3),\varphi_{17,1}(g_4))=(7,13,17)$ simply when $c$ is odd.
We next consider when $(\varphi_{7,1}(g_2),\varphi_{13,1}(g_3),\varphi_{17,1}(g_4))=(2,2,1)$. Similarly we have
\begin{equation}\label{condition2819}
\begin{aligned}
\varphi_{7,1}(g_2)=2\iff& g_2=1\iff 7\mid\rho_{c,3}\text{ and }7^2\nmid\rho_{c,3}\iff 2\mid c\text{ and }14\nmid c\\
\varphi_{13,1}(g_3)=2\iff& g_3=1\iff 13\mid\rho_{c,3}\text{ and }13^2\nmid\rho_{c,3}\iff 2\mid c\text{ and }26\nmid c\\
\varphi_{17,1}(g_4)=1\iff& g_4\ge2\iff 17^2\mid\rho_{c,3}\iff 272\mid c.
\end{aligned}
\end{equation}
Hence $(\varphi_{7,1}(g_2),\varphi_{13,1}(g_3),\varphi_{17,1}(g_4))=(2,2,1)$ precisely when $272\mid c$ and $(c,7\cdot13)=1$. Hence we have established that

\begin{theorem}\label{819t}
$819\rho_{c,3}$ is $v$-palindromic if and only if $c$ is odd or if $272\mid c$ and $(c,7\cdot13)=1$.
\end{theorem}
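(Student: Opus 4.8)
The plan is to push the bookkeeping already begun above to its conclusion. First I would observe that $819\rho_{c,3}$ lies in $\mathbb{N}_{\ne 10}$ (its last digit is $9$) and is non-palindromic (its leading three digits $819$ differ from $918$), and that, since reversal of the decimal expansion commutes with repeating a block of digits, $r(819\rho_{c,3})=r(819)\rho_{c,3}=918\rho_{c,3}$. Hence $819\rho_{c,3}$ is $v$-palindromic if and only if $v(819\rho_{c,3})=v(918\rho_{c,3})$. Writing $\rho_{c,3}=3^{g_1}7^{g_2}13^{g_3}17^{g_4}b$ with $(b,3\cdot 7\cdot 13\cdot 17)=1$ and applying the additivity of $v$ to the two factorizations displayed above, this equality becomes, after cancelling $v(b)$ and rearranging,
\[
\varphi_{7,1}(g_2)+\varphi_{13,1}(g_3)=2+\varphi_{3,1}(2+g_1)+\varphi_{17,1}(g_4).
\]
Because $2+g_1\ge 2$, \eqref{varphip1} gives $\varphi_{3,1}(2+g_1)=1$, so the condition simplifies to $\varphi_{7,1}(g_2)+\varphi_{13,1}(g_3)=3+\varphi_{17,1}(g_4)$.

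Next I would determine for which $c$ this holds. Since $R_{7,1}=\{1,2,7\}$, $R_{13,1}=\{1,2,13\}$, $R_{17,1}=\{1,2,17\}$, a short exhaustive check of $u_2+u_3=3+u_4$ over these three sets shows the only solutions are $(u_2,u_3,u_4)=(7,13,17)$ and $(2,2,1)$. So the task reduces to describing, for each of these two triples, the set of $c$ with $(\varphi_{7,1}(g_2),\varphi_{13,1}(g_3),\varphi_{17,1}(g_4))$ equal to it. Reading the fibers off \eqref{varphip1} (equivalently, Lemma \ref{firsttrans}), this amounts to prescribing each of $g_2,g_3,g_4$ to be $0$, to be $1$, or to be $\ge 2$; Lemma \ref{secondtrans} converts such prescriptions into divisibility statements $7\nmid\rho_{c,3}$, ``$7\mid\rho_{c,3}$ and $7^2\nmid\rho_{c,3}$'', $17^2\mid\rho_{c,3}$, and so on; and Lemma \ref{divisiblelemma} together with the tabulated values $h_{7,3}=2$, $h_{7^2,3}=14$, $h_{13,3}=2$, $h_{13^2,3}=26$, $h_{17,3}=16$, $h_{17^2,3}=272$ converts each of those into a divisibility statement about $c$. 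Carrying this out, the triple $(7,13,17)$ corresponds to ``$2\nmid c$ and $16\nmid c$'', i.e.\ simply $c$ odd; the triple $(2,2,1)$ corresponds to ``$272\mid c$ and $14\nmid c$ and $26\nmid c$'', and since $272\mid c$ already forces $2\mid c$, this is ``$272\mid c$ and $(c,7\cdot 13)=1$''. The two cases are disjoint (one forces $c$ odd, the other $c$ even), so their union is the asserted characterization.

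I expect the only genuinely non-mechanical point to be the exhaustive solution of $u_2+u_3=3+u_4$ over the three sets $R_{p,1}$: here it is a handful of cases, but it is exactly the step that in the proof of the general Theorem \ref{main} will have to be replaced by a structural argument about the finitely many equations of this type. The one recurring subtlety, the absence of an exponent $1$ from the sum $v$, is already absorbed into the machinery of the functions $\varphi_{p,\delta}$, so it causes no trouble here beyond forcing the use of $\varphi_{3,1}(2+g_1)$ rather than $\varphi_{3,1}(g_1)$ for the prime $3$, whose exponent in $819$ is $2$.
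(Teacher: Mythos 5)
Your proposal is correct and follows essentially the same route as the paper: reduce via additivity of $v$ and the factorization $\rho_{c,3}=3^{g_1}7^{g_2}13^{g_3}17^{g_4}b$ to the equation $\varphi_{7,1}(g_2)+\varphi_{13,1}(g_3)=3+\varphi_{17,1}(g_4)$, solve $u_2+u_3=3+u_4$ exhaustively to get the two triples $(7,13,17)$ and $(2,2,1)$, and translate the fiber conditions on $g_2,g_3,g_4$ into divisibility conditions on $c$ via Lemmas \ref{secondtrans} and \ref{divisiblelemma} and the tabulated values of $h_{p^\alpha,3}$. The only differences are cosmetic (you make explicit the non-palindromicity of $819\rho_{c,3}$ and the identity $r(819\rho_{c,3})=918\rho_{c,3}$, which the paper uses implicitly), so nothing further is needed.
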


From the above theorem, we immediately see that $c(819)=1$ (refer to definitions in Definition \ref{defnofwc}). 

We see that $819\rho_{c,3}$ is $v$-palindromic if and only if all 3 conditions in \eqref{condition1} hold, or if all 3 conditions in \eqref{condition2} hold. Now these conditions have the same truth values when $c$ increases by $\operatorname{lcm}(16,14,26,272)=24752$. Hence $\omega=24752$ is a period of $819$. With some work, it can be shown that actually it is the smallest period, that is, $\omega(819)=24752$.

\section{Proof of the main theorem}\label{proof}
\setcounter{equation}{0}
\setcounter{theorem}{0}

We now enter the proof of the main theorem and this is essentialy writing the discussion about $819$ in the previous section in the general setting.

Let the prime factorizations of $n$ and $r(n)$ be
\begin{equation*}
\begin{aligned}
n=&p_1^{e_1}p^{e_2}_2\ldots p^{e_m}_m,\\
r(n)=&p_1^{f_1}p^{f_2}_2\ldots p^{f_m}_m,
\end{aligned}
\end{equation*}
where we have done the factorization over the set of primes which divide one of $n$ or $r(n)$, setting $e_i=0$ or $f_i=0$ if necessary. Since $n\ne r(n)$, $e_i\ne f_i$ for some $i$. Let the set of $i$ such that $e_i\ne f_i$
\begin{equation}\label{indices}
i_1<i_2<\ldots<i_t.
\end{equation}

Let the prime factorization of $\rho_{c,k}$ be
\begin{equation}\label{rhockfactorization}
\rho_{c,k}=p^{g_1}_1p^{g_2}_2\ldots p^{g_m}_mb,
\end{equation}
where $(b,p_1p_2\ldots p_m)=1$. The $g_1,g_2,\ldots,g_m,b$ obviously depends on $c$, but we suppress it from our notation for simplicity. Then
\begin{equation*}
\begin{aligned}
n\rho_{c,k}=&p^{e_1+g_1}_1p^{e_2+g_2}_2\ldots p^{e_m+g_m}_mb,\\
r(n\rho_{c,k})=r(n)\rho_{c,k}=&p^{f_1+g_1}_1p^{f_2+g_2}_2\ldots p^{f_m+g_m}_mb.
\end{aligned}
\end{equation*}
Taking their $v$, we have
\begin{equation*}
\begin{aligned}
v(n\rho_{c,k})=&\sum^m_{i=1}v(p^{e_i+g_i}_i)+v(b),\\
v(r(n\rho_{c,k}))=&\sum^m_{i=1}v(p^{f_i+g_i}_i)+v(b).
\end{aligned}
\end{equation*}
Hence $n\rho_{c,k}$ is $v$-palindromic, that is, $v(n\rho_{c,k})=v(r(n\rho_{c,k}))$, if and only if
\begin{equation}\label{rule1}
\sum^m_{i=1}(v(p^{e_i+g_i}_i)-v(p^{f_i+g_i}_i))=0.
\end{equation}
Whence $e_i=f_i$, of course the term $v(p^{e_i+g_i}_i)-v(p^{f_i+g_i}_i)=0$, so by \eqref{indices}, \eqref{rule1} is equivalent to
\begin{equation}\label{rule2}
\sum^t_{j=1}(v(p^{e_{i_j}+g_{i_j}}_{i_j})-v(p^{f_{i_j}+g_{i_j}}_{i_j}))=0.
\end{equation}
But this is a cumbersome notation, so we just write $p_{i_j}$ as $p_j$, $e_{i_j}$ as $e_j$, $f_{i_j}$ as $f_j$, and $g_{i_j}$ as $g_j$, which will not cause confusion from here on because we will not be referring to the other prime factors or exponents hereafter. Consequently \eqref{rule2} becomes
\begin{equation}\label{rule3}
\sum^t_{j=1}(v(p^{e_j+g_j}_j)-v(p^{f_j+g_j}_j))=0.
\end{equation}
We also write
\begin{equation*}
\begin{aligned}
\delta_j=&e_j-f_j,\\
\mu_j=&\min(e_j,f_j)\\
\alpha_j=&\mu_j+g_j,
\end{aligned}
\end{equation*}
for $1\le j\le t$. Then it is clear that the left-hand-side of \eqref{rule3} can be rewritten, using the functions $\varphi_{p,\delta}$ of Section \ref{functions}, as
\begin{equation}\label{rule4}
\sum^t_{j=1}(v(p^{e_j+g_j}_j)-v(p^{f_j+g_j}_j))=\sum^t_{j=1}\operatorname{sgn}(\delta_j)(v(p^{\alpha_j+|\delta_j|}_j)-v(p^{\alpha_j}_j))=\sum^t_{j=1}\operatorname{sgn}(\delta_j)\varphi_{p_j,|\delta_j|}(\alpha_j),
\end{equation}
where $\operatorname{sgn}$ is the sign function with $\operatorname{sgn}(\delta_j)=1$ if $\delta_j>0$ and $\operatorname{sgn}(\delta_j)=-1$ if $\delta_j<0$. Now consider the equation
\begin{equation}\label{relationu}
\sum^t_{j=1}\operatorname{sgn}(\delta_j)u_j=0.
\end{equation}
Supposedly we can solve it for
\begin{equation*}
(u_1,u_2,\ldots,u_t)\in R_{p_1,|\delta_1|}\times R_{p_2,|\delta_2|}\times\cdots R_{p_t,|\delta_t|}.
\end{equation*}
Let the set of all solutions be
\begin{equation*}
U=\{u=(u_1,\ldots,u_t)\}.
\end{equation*}
Then we see that
\begin{equation*}
\sum^t_{j=1}\operatorname{sgn}(\delta_j)\varphi_{p_j,|\delta_j|}(\alpha_j)=0
\end{equation*}
holds if and only if for some $u\in U$,
\begin{equation*}
\varphi_{p_j,|\delta_j|}(\alpha_j)=u_j\quad\forall 1\le j\le t.
\end{equation*}
Summarizing up to now, we have shown that

\begin{lemma}\label{proofmiddle}
$n\rho_{c,k}$ is $v$-palindromic if and only if for some $u\in U$, $\varphi_{p_j,|\delta_j|}(\alpha_j)=u_j$ for all $1\le j\le t$.
\end{lemma}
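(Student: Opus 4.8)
The plan is to read the equivalence straight off the computation preceding the statement: almost all of the work — rewriting $v$-palindromicity of $n\rho_{c,k}$ as the vanishing of a sum of differences of $v$-values, discarding the indices with $e_i=f_i$, and relabelling along \eqref{indices} — has already been carried out, so the proof is mainly a matter of chaining \eqref{rule1}--\eqref{rule4} together and then invoking the definition of $U$.

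First I would note that, by additivity of $v$ applied to the factorizations \eqref{rhockfactorization} of $n\rho_{c,k}$ and of $r(n\rho_{c,k})=r(n)\rho_{c,k}$, the condition $v(n\rho_{c,k})=v(r(n\rho_{c,k}))$ is exactly \eqref{rule1}; since the summands with $e_i=f_i$ contribute zero, this is \eqref{rule2}, hence — after the harmless relabelling $p_{i_j}\mapsto p_j$, $e_{i_j}\mapsto e_j$, $f_{i_j}\mapsto f_j$, $g_{i_j}\mapsto g_j$ — \eqref{rule3}. The one genuine computation is the identity
\[
v(p_j^{e_j+g_j})-v(p_j^{f_j+g_j})=\operatorname{sgn}(\delta_j)\,\varphi_{p_j,|\delta_j|}(\alpha_j),
\]
which I would verify by cases on the sign of $\delta_j=e_j-f_j$ (note $\delta_j\ne 0$ for the retained indices): if $\delta_j>0$ then $\mu_j=f_j$, so $f_j+g_j=\alpha_j$ and $e_j+g_j=\alpha_j+|\delta_j|$, and the difference is $+\varphi_{p_j,|\delta_j|}(\alpha_j)$; if $\delta_j<0$ the roles of $e_j$ and $f_j$ swap and one picks up the factor $-1$. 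Summing over $1\le j\le t$ gives \eqref{rule4}, so $n\rho_{c,k}$ is $v$-palindromic if and only if $\sum_{j=1}^t\operatorname{sgn}(\delta_j)\varphi_{p_j,|\delta_j|}(\alpha_j)=0$.

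To finish I would use that $R_{p_j,|\delta_j|}$ is, by definition, the image of $\varphi_{p_j,|\delta_j|}$, so the tuple $(\varphi_{p_1,|\delta_1|}(\alpha_1),\dots,\varphi_{p_t,|\delta_t|}(\alpha_t))$ always lies in $R_{p_1,|\delta_1|}\times\cdots\times R_{p_t,|\delta_t|}$. Hence the displayed sum vanishes precisely when this tuple is a solution of \eqref{relationu}, i.e. precisely when it belongs to $U$, which is the same as saying that $\varphi_{p_j,|\delta_j|}(\alpha_j)=u_j$ for all $1\le j\le t$ for some $u\in U$; the reverse implication is immediate from the definition of $U$. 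I do not expect a real obstacle — the statement is essentially a summary of the preceding paragraphs — so the only thing to watch carefully is the bookkeeping: keeping the two layers of relabelling straight, recording that $\delta_j\ne 0$ on the retained index set, and being consistent about the $\operatorname{sgn}$ convention.
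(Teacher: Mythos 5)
Your proposal is correct and follows essentially the same route as the paper, which establishes the lemma by the very chain \eqref{rule1}--\eqref{rule4} you describe (the sign-of-$\delta_j$ case check included, implicitly) and then reads off the equivalence from the definition of $U$ as the solution set of \eqref{relationu} in $R_{p_1,|\delta_1|}\times\cdots\times R_{p_t,|\delta_t|}$. The paper states the lemma as a summary of exactly this computation, so your write-up matches its argument.
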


Now let us consider just the "atomic" condition $\varphi_{p_j,|\delta_j|}(\alpha_j)=\varphi_{p_j,|\delta_j|}(\mu_j+g_j)=u_j$. By Lemmas \ref{firsttrans}, \ref{secondtrans}, and \ref{divisiblelemma},
\begin{eqnarray}\label{condition1}
\varphi_{p_j,|\delta_j|}(\mu_j+g_j)=u_j\iff \left\{ \begin{array}{ll}
g_j=0, & (\text{if \eqref{firsttrans1} and $\mu_j=0$, or \eqref{firsttrans2} and $\mu_j=1$, or \eqref{firsttrans3} and $\mu_j=1$}) \\
g_j=1, & (\text{if \eqref{firsttrans2} and $\mu_j=0$}) \\
g_j\le1, & (\text{if \eqref{firsttrans3} and $\mu_j=0$}) \\
g_j\ge1, & (\text{if \eqref{firsttrans4} and $\mu_j=1$}) \\
g_j\ge2, & (\text{if \eqref{firsttrans4} and $\mu_j=0$}) \\
\text{impossible}, & (\text{otherwise}) \\
\text{always true}. & (\text{if \eqref{firsttrans4} and $\mu_j\ge2$}) \\
\end{array} \right.
\end{eqnarray}
As the last two cases, "impossible" and "always true", never change (as $c$ varies), we exclude them from our consideration. By Lemma \ref{firsttrans}, we can continue the equivalences in \eqref{condition1} respectively (here we do not write out the cases as in \eqref{condition1}), recalling that $g_j=\operatorname{ord}_{p_j}(\rho_{c,k})$
\begin{eqnarray}\label{condition2}
\varphi_{p_j,|\delta_j|}(\mu_j+g_j)=u_j\iff \left\{ \begin{array}{ll}
p_j\nmid\rho_{c,k}, \\
p_j\mid\rho_{c,k}\text{ and } p^2_j\nmid\rho_{c,k}, \\
p^2_j\nmid\rho_{c,k}, \\
p_j\mid\rho_{c,k}, \\
p^2_j\mid\rho_{c,k}. \\
\end{array} \right.
\end{eqnarray}
In case $p_j\ne 2,5$, we can use Lemma \ref{secondtrans} to \eqref{condition2} to obtain, respectively
\begin{eqnarray}
\varphi_{p_j,|\delta_j|}(\mu_j+g_j)=u_j\iff \left\{ \begin{array}{ll}
h_{p_j,k}\nmid c, \\
h_{p_j,k}\mid c\text{ and } h_{p^2_j,k}\nmid c, \\
h_{p^2_j,k}\nmid c, \\
h_{p_j,k}\mid c, \\
h_{p^2_j,k}\mid c. \\
\end{array} \right.
\end{eqnarray}
However, in case $p_j=2,5$, by the Remark \ref{however}, \eqref{condition2} becomes
\begin{eqnarray}
\varphi_{p_j,|\delta_j|}(\mu_j+g_j)=u_j\iff \left\{ \begin{array}{ll}
\text{always true}, \\
\text{impossible}, \\
\text{always true}, \\
\text{impossible}, \\
\text{impossible}. \\
\end{array} \right.
\end{eqnarray}
Since in general $a\mid b\iff a\mid(b+b')$ if $a\mid b'$ ($a,b,b'\ge1$ arbitrary integers, the $b$ not the one introduced in \eqref{rhockfactorization}), we see that the truth of $\varphi_{p_j,|\delta_j|}(\mu_j+g_j)=u_j$ does not change if we increase $c$ by
\begin{equation}\label{omega}
\omega=\operatorname{lcm}\{h_{p_j,k},h_{p^2_j,k}:p_j\ne2,5\}.
\end{equation}
In view of Lemma \ref{proofmiddle}, whether $n\rho_{c,k}$ is $v$-palindromic depends only on the truths of the individual $\varphi_{p_j,|\delta_j|}(\mu_j+g_j)=u_j$.
Hence this $\omega$ serves as a possible $\omega$ as required by the main theorem.

\section{Further Problems}

  In the proof of the main theorem, we found constructively a possible $\omega$ in \eqref{omega}, let us denote it by $\omega_f(n)$. However whether or not $\omega_f(n)$ is the smallest period, i.e. $\omega(n)$, is still unclear, although we know by Theorem \ref{period} that $\omega(n)\mid \omega_f(n)$. The following is a table of $\omega_f(n)$, $\omega(n)$, and $c(n)$, for $n\le 56$ with $n<r(n)$, computed using Mathematica \cite{Math}. We can assume without loss of generality that $n<r(n)$ because the pattern for $n$ and $r(n)$ are exactly the same, i.e.
  \begin{equation*}
\begin{aligned}
\omega_f(n)=&\omega_f(r(n)),\\
\omega(n)=&\omega(r(n)),\\
c(n)=&c(r(n)).
\end{aligned}
\end{equation*}

  \bigskip
  \begin{center}
    \def\arraystretch{2.2}
  \begin{tabular}{c|c|c|c|c|c|c|c|c|c}
  \rowcolor[rgb]{0.9, 0.9, 0.9}  
  $n$ & $12$ & $13$ & $14$ & $15$ & $16$ & $17$ & $18$ & $19$ & $23$ \\
  \hline \hline
  $\omega_f(n)$ & $21$ & $6045$ & $4305$ & $136$ & $1830$ & $337960$ & $9$ & $15561$ & $253$ \\
  \hline
  $\omega(n)$ & $1$ & $6045$ & $1$ & $1$ & $1$ & $337960$ & $1$ & $15561$ & $1$ \\
  \hline
  $c(n)$ & $\infty$ & $15$ & $\infty$ & $\infty$ & $\infty$ & $280$ & $1$ & $819$ & $\infty$ \\
  \hline \hline
  \rowcolor[rgb]{0.9, 0.9, 0.9}
  $n$ & $24$ & $25$ & $26$ & $27$ & $28$ & $29$ & $34$ & $35$ & $36$ \\
  \hline \hline
  $\omega_f(n)$ & $21$ & $39$ & $6045$ & $9$ & $4305$ & $102718$ & $122808$ & $14469$ & $21$ \\
  \hline
  $\omega(n)$ & $1$ & $1$ & $6045$ & $1$ & $1$ & $1$ & $1$ & $1$ & $1$ \\
  \hline
  $c(n)$ & $\infty$ & $\infty$ & $15$ & $\infty$ & $\infty$ & $\infty$ & $\infty$ & $\infty$ & $\infty$ \\
  \hline \hline
  \rowcolor[rgb]{0.9, 0.9, 0.9}
  $n$ & $37$ & $38$ & $39$ & $45$ & $46$ & $47$ & $48$ & $49$ & $56$ \\
  \hline \hline
  $\omega_f(n)$ & $32412$ & $581913$ & $6045$ & $9$ & $253$ & $119991$ & $21$ & $22701$ & $273$ \\
  \hline
  $\omega(n)$ & $32412$ & $1$ & $6045$ & $1$ & $1$ & $1$ & $21$ & $22701$ & $273$ \\
  \hline
  $c(n)$ & $12$ & $\infty$ & $15$ & $\infty$ & $\infty$ & $\infty$ & $3$ & $3243$ & $3$ \\
\end{tabular}
\end{center}
\bigskip
From this table, it seems that we always have $\omega(n)=1$ or $\omega(n)=\omega_f(n)$. Therefore we make the following conjecture

\begin{conjecture}
Let $n$ be a natural number with $n\in\mathbb{N}_{\ne10}$ and $n\ne r(n)$. Then $\omega(n)=1$ or $\omega(n)=\omega_f(n)$.
\end{conjecture}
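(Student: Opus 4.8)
\emph{Towards a proof.} The plan is to re-express $v$-palindromicity of $n\rho_{c,k}$ as the value at $c$ of a fixed Boolean function of finitely many divisibility conditions on $c$, and then to show, one prime at a time, that once this function is non-constant it cannot be periodic with a period properly dividing $\omega_f(n)$.

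Keeping the notation of Section~\ref{proof}, write $g_j=\operatorname{ord}_{p_j}(\rho_{c,k})$ and $\sigma_j=\sigma_j(c)=\min(g_j,2)\in\{0,1,2\}$ for the relevant primes $p_1,\dots,p_t$. By Lemma~\ref{proofmiddle} and the translations \eqref{condition1}--\eqref{condition2}, each atomic condition $\varphi_{p_j,|\delta_j|}(\mu_j+g_j)=u_j$ is equivalent to one of $\sigma_j=0$, $\sigma_j=1$, $\sigma_j\ge 1$, $\sigma_j\ge 2$, or to a case not involving $c$; hence $n\rho_{c,k}$ is $v$-palindromic if and only if $G\bigl((\sigma_j(c))_{j=1}^{t}\bigr)$ holds, where $G=\bigvee_{u\in U}\bigwedge_{j=1}^{t}\bigl[\varphi_{p_j,|\delta_j|}(\mu_j+g_j)=u_j\bigr]$ and $U$ is the solution set of \eqref{relationu}. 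For $p_j=2,5$ the state $\sigma_j$ is identically $0$, and for $p_j\neq 2,5$ Lemma~\ref{divisiblelemma} shows that $\sigma_j(c)$ is determined by which of $h_{p_j,k},h_{p_j^2,k}$ divide $c$, via $\sigma_j\ge 1\iff h_{p_j,k}\mid c$ and $\sigma_j=2\iff h_{p_j^2,k}\mid c$. Consequently $c\mapsto G\bigl((\sigma_j(c))_j\bigr)$ has period dividing $\omega_f(n)=\operatorname{lcm}\{h_{p_j,k},h_{p_j^2,k}:p_j\neq 2,5\}$, and it is a constant function exactly when $\omega(n)=1$. Assume it is non-constant (so that, since $h_{p_j,k}>1$ by Lemma~\ref{divisiblelemma}, also $\omega_f(n)>1$). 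As every multiple of $\omega(n)$ dividing $\omega_f(n)$ is a period by Theorem~\ref{period}, it suffices to prove that $\omega_f(n)/\ell$ is \emph{not} a period for each prime $\ell\mid\omega_f(n)$.

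Fix such an $\ell$, put $a=\operatorname{ord}_\ell(\omega_f(n))=\max_j\operatorname{ord}_\ell(h_{p_j^2,k})$, and write $\omega_f(n)=\ell^{a}M$ with $\ell\nmid M$. Replacing $c$ by $c+\omega_f(n)/\ell=c+\ell^{a-1}M$ leaves the residue of $c$ modulo $M$ unchanged and shifts its residue modulo $\ell^{a}$ by an element of order $\ell$, so it changes $\sigma_j(c)$ only for indices $j$ with $\operatorname{ord}_\ell(h_{p_j^2,k})=a$. The idea is to choose one such $j_0$ with $\mu_{j_0}=\min(e_{j_0},f_{j_0})=0$ --- so that all three states of $\sigma_{j_0}$ occur and the switch between $\sigma_{j_0}=1$ and $\sigma_{j_0}=2$ is controlled by the $\ell$-part of $c$ --- then to use the Chinese Remainder Theorem to fix $c$ modulo $M$ so that, for a suitable $u\in U$, every conjunct of the clause of $u$ with index $j\neq j_0$ holds and remains true after the shift, while no other clause of $G$ becomes true; finally one arranges the $\ell$-part of $c$ so that the $\sigma_{j_0}$-conjunct flips. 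Then $G\bigl((\sigma_j(c))_j\bigr)$ changes across the shift, so $\omega_f(n)/\ell$ is not a period, and doing this for every $\ell$ gives $\omega(n)=\omega_f(n)$.

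The crux of the matter, and where I expect the real difficulty to lie, is that this selection can fail. First, the disjunctive form of $G$ may make $G\bigl((\sigma_j(c))_j\bigr)$ genuinely independent of $\sigma_{j_0}$ for every admissible $j_0$: this occurs, for instance, when after fixing the remaining coordinates the clauses of $G$ compatible with that fixing together exhaust all admissible values of $\sigma_{j_0}$, and yet $\ell^{a}$ enters $\omega_f(n)$ only through $h_{p_{j_0}^2,k}$. Ruling this out seems to require a genuine understanding of the solution set $U$ of \eqref{relationu} --- and of the subsets cut out of it by deleting the $c$-independent $p=2,5$ constraints and then the impossible conjuncts --- sufficient to show that whenever such a covering collapse occurs, either $G$ is after all constant or $\ell^{a}\mid M$, i.e. $\ell^{a}$ is already provided by the other relevant primes. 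Second, a relevant prime $p_j\neq 2,5$ with $\mu_j\ge 1$ still contributes $h_{p_j^2,k}$ (and, when $\mu_j\ge 2$, also $h_{p_j,k}$) to the lcm \eqref{omega} even though it never influences $G$; here one would use the arithmetic of the multiplicative orders $h_{p^{\alpha},k}$ --- in particular that $h_{p^{2},k}/h_{p,k}\in\{1,p\}$ --- to argue that, in the non-constant case, these surplus factors are always redundant in $\omega_f(n)$. Establishing these covering and redundancy facts for arbitrary $n$ is precisely what would upgrade the constructive period $\omega_f(n)$ of Theorem~\ref{main} to the sharp dichotomy asserted in the conjecture.
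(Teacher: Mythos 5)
You should know at the outset that this statement is not proved in the paper: it appears only as a conjecture, supported by the table of $\omega_f(n)$, $\omega(n)$, $c(n)$ for $n\le 56$, and the paper itself says that whether $\omega_f(n)$ is the smallest period ``is still unclear.'' So there is no paper proof to compare your argument with, and your text, as you yourself acknowledge, does not close the question either --- it is a strategy with the decisive step left open.

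The parts you do carry out are sound, but they essentially re-package what the paper already has: the encoding of $v$-palindromicity of $n\rho_{c,k}$ as a fixed Boolean function $G$ of the truncated orders $\sigma_j(c)=\min(\operatorname{ord}_{p_j}(\rho_{c,k}),2)$ is just Lemma~\ref{proofmiddle} together with \eqref{condition1}--\eqref{condition2} (note you omit the state $\sigma_j\le 1$, which occurs in \eqref{condition2}, though this does not affect the structure), and the reduction ``it suffices to show $\omega_f(n)/\ell$ is not a period for each prime $\ell\mid\omega_f(n)$'' is a correct use of Theorem~\ref{period}. The genuine gap is the flip construction: you must exhibit, for each such $\ell$, a residue class of $c$ on which the shift $c\mapsto c+\omega_f(n)/\ell$ changes the truth value of $G$, and you concede that the two obstructions you name --- the covering collapse, where $G$ is insensitive to every coordinate $\sigma_{j_0}$ whose prime carries the full $\ell$-part of $\omega_f(n)$, and the redundancy problem, where primes $p_j$ with $\mu_j\ge1$ or with only ``impossible''/``always true'' atomic conditions still inject $h_{p_j,k}$, $h_{p_j^2,k}$ into the lcm \eqref{omega} without ever influencing $G$ --- are not ruled out by anything you prove. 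The second obstruction is exactly why the dichotomy is delicate: when such surplus factors are present and not absorbed by the other primes, one would have $\omega(n)<\omega_f(n)$ with $G$ non-constant, contradicting the conjecture, so a proof must show this never happens; that requires control of the solution set $U$ of \eqref{relationu} (which values $u_j$, hence which states $\sigma_j$, actually matter) that neither the paper nor your proposal provides. As it stands, your text is a reasonable research plan for attacking the conjecture, not a proof of it.
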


For the third rows, i.e. the rows of values of $c(n)$, $\infty$ means that by concatenating the decimal digits of $n$ any number of times, no $v$-palindromic number will be reached; otherwise $c(n)$ is the least number of times one have to concatenate the decimal digits of $n$ to reach a $v$-palindromic number. Therefore we can consider such a problem raised by Michel Marcus

\begin{problem}\label{problem}
  Is there a simple way to determine whether $c(n)=\infty$ or not?
\end{problem}

Finally, it seems that for most $n$, $c(n)=\infty$. In fact, it can be shown that all the numbers in \eqref{2178} have $c(n)=\infty$, so in particular there are infinitely many such numbers. Hence it is natural to conjecture

\begin{conjecture}
  Let $S=\{n\in\mathbb{N}:10\nmid n, n<r(n)\}$ and let $T=\{n\in S:c(n)=\infty\}$. Then the asymptotic density of $T$ in $S$ is $1$.
\end{conjecture}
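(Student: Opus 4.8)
The plan is to prove the equivalent assertion that $S\setminus T=\{n\in S:c(n)<\infty\}$ has density $0$ in $S$, writing $P(m)$ for the largest prime factor of a natural number $m$. The first step is to make ``$c(n)<\infty$'' explicit in the notation of Section~\ref{proof}: with $\delta_j,\mu_j,g_j=\operatorname{ord}_{p_j}(\rho_{c,k}),\alpha_j=\mu_j+g_j$ as there, set
\[
D_n(c)=\sum_{j=1}^{t}\operatorname{sgn}(\delta_j)\,\varphi_{p_j,|\delta_j|}(\alpha_j);
\]
by \eqref{rule4}, $n\rho_{c,k}$ is $v$-palindromic exactly when $D_n(c)=0$, and since each $g_j$ depends on $c$ only through the divisibilities $h_{p_j,k}\mid c$, $h_{p^2_j,k}\mid c,\dots$ (Lemma~\ref{divisiblelemma}) while each $\varphi_{p,\delta}$ is eventually constant, $D_n$ has finite range. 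By Lemma~\ref{proofmiddle}, then, $c(n)<\infty$ if and only if there is a vector $u\in U$, i.e.\ a solution of $\sum_j\operatorname{sgn}(\delta_j)u_j=0$ with $u_j\in R_{p_j,|\delta_j|}$, and a $c$ realizing it, i.e.\ (unwinding the case analysis of Section~\ref{proof}) such that for each $j$ the divisibility of $\rho_{c,k}$ by $p_j$ and by $p^2_j$ is the one prescribed by $u_j$. Thus the problem splits into the \emph{solvability} of the linear balance and the \emph{realizability} of the resulting on/off pattern of divisibilities $h_{p_j,k}\mid c$, $h_{p^2_j,k}\mid c$.

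The observation that reshapes the problem is that solvability almost never fails. Because $\{|\delta_j|,|\delta_j|+1\}\subseteq R_{p_j,|\delta_j|}$ for every $j$, one may take $u_j=|\delta_j|+\varepsilon_j$ with $\varepsilon_j\in\{0,1\}$ (and $\varepsilon_j=0$ forced when $\mu_j\ge2$), and then
\[
\sum_j\operatorname{sgn}(\delta_j)u_j=\bigl(\Omega(n)-\Omega(r(n))\bigr)+\sum_j\operatorname{sgn}(\delta_j)\varepsilon_j,
\]
and choosing the $\varepsilon_j$ over the indices with $\mu_j\le1$ the right-hand side sweeps through an interval of integers that contains $0$ as soon as $|\Omega(n)-\Omega(r(n))|$ is at most the number of such indices of each sign. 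Using the normal order $\Omega(n)\sim\log\log n$ and a digit-reversal analogue of it for $r(n)$ (so that $\Omega(n),\Omega(r(n))=(1+o(1))\log\log n$ and $|\Omega(n)-\Omega(r(n))|=o(\log\log n)$ for a density-$1$ set of $n$), together with $\gcd(n,r(n))$ being small for a density-$1$ set of $n$ (so the index counts above are $(1+o(1))\log\log n$), this ``all-small'' vector lies in $U$ for density-$1$ many $n$. Consequently the conjecture can hold only if, for density-$1$ many $n$, \emph{no} vector in $U$ — in particular not the all-small one — is realizable by any $c$.

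Realizability of the all-small vector demands a $c$ that is divisible by $h_{p_j,k}$ for every one-sided prime $p_j$ (those dividing exactly one of $n,r(n)$; for density-$1$ many $n$ this set includes $P(n)$ and $P(r(n))$), divisible by $h_{p^2_j,k}$ for the one-sided $p_j$ with $\varepsilon_j=0$, and \emph{not} divisible by $h_{p^2_j,k}$ for the one-sided $p_j$ with $\varepsilon_j=1$ (with further ``not-divide'' constraints from the common primes, which in practice force $\varepsilon_j=0$ on the small common primes, so that the genuine freedom sits in the one-sided primes). Taking $c$ to be the least common multiple $L$ of the mandatory divisors, realizability fails precisely when some $h_{p^2_j,k}$ with $\varepsilon_j=1$ divides $L$; and since $h_{p^2_j,k}/h_{p_j,k}\in\{1,p_j\}$, this occurs only through a ``probability $1/p_j$'' coincidence in how orders lift from $\mathbb{Z}/p_j$ to $\mathbb{Z}/p_j^2$, or when some other prime dividing $nr(n)$ is $\equiv1\pmod{p_j}$. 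The plan would then be to prove that for density-$1$ many $n$ these obstructions cannot all be dodged: one would exhibit a one-sided prime $p_{j_0}$ for which $h_{p^2_{j_0},k}\mid L$ (forcing $\varepsilon_{j_0}=0$) while the target $\Omega(r(n))-\Omega(n)$ still cannot be met using only the $\varepsilon_j$ that remain free, and then repeat the analysis for the remaining, finitely many, vectors of $U$ — those using a large coordinate $u_j=p_j$, which (by comparing $p_j$ against the sum of the other one-sided primes of $n$ and $r(n)$, which is $O(\sqrt{n}\,\log n)$, and using Dickman-type anatomy for the smooth cofactors of $n$ and $r(n)$) can be afforded only on a density-$0$ set, after which the same lcm obstruction reappears.

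The main obstacle — and the reason I would not expect a short proof — is exactly this last step. It is a question about the multiplicative orders $h_{p,k}=\operatorname{ord}_{p^{1+\beta}}(10^k)$ as $p$ runs over the prime factors of $n$ and of $r(n)$, namely whether a prescribed pattern of divisibilities $h_{p,k}\mid c$, $h_{p^2,k}\mid c$ can be satisfied jointly; here one has enormous latitude — any multiple of the forced lcm, however astronomically large, is a candidate $c$ — so forcing non-realizability for a density-$1$ set of $n$ is delicate, and it is entangled with the factorizations of the $p-1$ and with Artin-type properties of $10$, for which there is no off-the-shelf theory at the needed uniformity. Compounding this, the whole argument must be run jointly with the digit-reversal map, which links the multiplicative structure of $n$ to that of $r(n)$ without any standard machinery; even the anatomy input invoked above — that $P(n)$, $P(r(n))$, $\Omega(r(n))$, $\gcd(n,r(n))$ all behave typically for a density-$1$ set of $n$ — would require dedicated sieve estimates for the bilinear digit conditions. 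Indeed, should the all-small vector prove realizable for density-$1$ many $n$, the density of $T$ would be $0$ rather than $1$; deciding which way this goes is, in my view, the heart of the matter, and it is where I would expect the real work (or a reassessment of the conjecture) to be concentrated.
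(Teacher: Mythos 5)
There is a genuine gap here, and you in fact name it yourself: this statement is a \emph{conjecture} in the paper, supported only by the computed table and by the observation that every number in \eqref{2178} has $c(n)=\infty$; the paper offers no proof, and your proposal does not supply one either. What you give is a reduction and a programme. The reduction (recasting $c(n)<\infty$ via \eqref{rule4} and Lemma \ref{proofmiddle} as the existence of a solution vector $u\in U$ together with a $c$ realizing the corresponding pattern of divisibilities $h_{p_j,k}\mid c$, $h_{p_j^2,k}\mid c$ from Lemma \ref{divisiblelemma}) is sound and is consistent with how Sections \ref{functions}--\ref{proof} operate, but the two steps that would actually yield density $1$ for $T$ are left open. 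First, the analytic inputs you invoke --- that $\Omega(r(n))$ has normal order $\log\log n$, that $P(r(n))$ is typically large, that $\gcd(n,r(n))$ is typically small, all on density-$1$ subsets of $S$ --- are asserted, not proved, and as you concede there is no off-the-shelf machinery linking the multiplicative anatomy of $n$ to that of its digit reversal. Second, and decisively, the step that would have to carry the conjecture --- showing that for density-$1$ many $n$ \emph{no} vector of $U$, in particular not the ``all-small'' one, is realizable by any $c$ --- is exactly the point at which you stop, noting that any multiple of the forced least common multiple is a candidate $c$ and that the obstruction would have to come from coincidences among the orders $h_{p,k}$, $h_{p^2,k}$ of Artin type.

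Worse for the proposed route, your own heuristic cuts against the conclusion you are asked to prove: you observe that the balance equation $\sum_j\operatorname{sgn}(\delta_j)u_j=0$ is solvable with small coordinates for typical $n$ (since $|\Omega(n)-\Omega(r(n))|$ is typically $o(\log\log n)$), and that realizability of that solution fails only through ``probability $1/p_j$'' order-lifting coincidences; if those coincidences are rare, the natural prediction is $c(n)<\infty$ for density-$1$ many $n$, i.e.\ density $0$ for $T$, the opposite of the conjecture. You acknowledge this explicitly. So the proposal should be read as a useful reformulation of the conjecture (solvability of the balance equation plus realizability of an order-divisibility pattern), together with an honest assessment that the heart of the matter --- and possibly the truth value of the statement --- remains undecided; it is not a proof, and there is no proof in the paper for it to be measured against.
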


\section{Some Sequences}

After I released my manuscript, I had some correspondneces with Michel Marcus. Inspired by my manuscript, Michel Marcus created the entries A338038, A338039, A338166, and A338371 of the OEIS \cite{S}. A338038 is the function $v(n)$ and A338039 is the sequence of $v$-palindromic numbers. A338371 is the sequence of integers $n>0$ such that $10\nmid n$, $n\ne r(n)$, and $c(n)<\infty$. 

\section{Acknowledgements}

The author is grateful for the careful reading by Prof. Kohji Matsumoto and Prof. Hiroshi Suzuki. The author also want to thank Michel Marcus for valuable correspondences.

\end{document}